%
%

\newif\ifextras
\extrastrue 
\ifextras
\fi 


\documentclass{amsart}

\usepackage{amsmath}
\usepackage{amsthm}
\usepackage{amssymb}
\usepackage{xcolor}
\usepackage{hyperref}
\hypersetup{
    colorlinks=true,
    linkcolor=black,
    filecolor=magenta,      
    urlcolor=blue,
}
\urlstyle{same}

\newtheorem*{definition*}{Definition} 

\newtheorem*{theorem*}{Theorem} 
\newtheorem{theorem}{Theorem}
\newtheorem{lemma}{Lemma}
\newtheorem{corollary}{Corollary}

\newcommand{\bHolAlpha}{b^{\rm{hol}}}
\newcommand{\bHarmAlpha}{b^{\rm{harm}}}
\newcommand{\KAlpha}{K_{\alpha}}
\newcommand{\FourierKAlpha}{\left({}_{(y,a,b,z,w)}\KAlpha\right)\hat{\ }}
\newcommand{\KZeroAlpha}{K^0_{\alpha}}
\newcommand{\FourierKZeroAlpha}{\left({}_{(y,b,z)}\KZeroAlpha\right)\hat{\ }}
\newcommand{\RAlpha}{R_{\alpha}}
\newcommand{\FourierRAlpha}{({}_{(y,a,b)}R_\alpha)\hat{\ }}
\newcommand{\BHolAlpha}{B^{\rm{hol}}_{\alpha}}
\newcommand{\BHarmAlpha}{B^{\rm{harm}}_{\alpha}}

\newcommand{\rhoTildeAlpha}{\tilde{\rho}_\alpha}

\newcommand{\myZeroFOne}[2]{{}_0{F}_1\left({{-} \atop #1} \left|\ #2 \frac{}{} \right. \right)}

\renewcommand{\Re}{\mathop{\mathrm{Re}}}
 
\newcommand{\Ric}{\mathop{\mathrm{Ric}}}


\begin{document}

\title{Harmonic Berezin transform on half-space with vertical weights}

\ifextras
\author{Jaroslav Brad\'ik}
\address{Mathematical Institute, Silesian University in Opava, Na Rybn\'i\v{c}ku 1, 74601, Opava, Czech Republic}
\email{jaroslav.bradik@math.slu.cz}
\fi 

\keywords{Bergman kernel, Harmonic Bergman kernel, Berezin transform, asymptotic expansion}



\makeatletter
\@namedef{subjclassname@2020}{%
  \textup{2020} Mathematics Subject Classification}
\makeatother
\subjclass[2020]{Primary: 46E22, 31B05}

\ifextras
\thanks{Research supported by GA \v{C}R grant no. 21-27941S}
\fi

\begin{abstract}
We consider weighted harmonic Bergman spaces on upper half-space with weights depending only on the vertical coordinate. In these settings, we give full asymptotic expansion of weighted harmonic Bergman kernel as well as full asymptotic expansion of the harmonic Berezin transform for functions depending only on the vertical coordinate. Both expansions utilize known holomorphic case for the Siegel domain.
\end{abstract}

\maketitle


\section{Introduction}

Consider the weighted \emph{harmonic Bergman space} $\bHarmAlpha(\mathbb{H}^n, \omega^\alpha)$ consisting of harmonic functions on the half-space $\mathbb{H}^{n} = \{ (x, y) \in \mathbb{R}^{n-1} \times \mathbb{R} : y > 0 \}$ square integrable with respect to the measure (weight)
\begin{equation*}
d\omega^\alpha = \omega(x, y)^\alpha\ dx\ dy,\quad (x, y) \in \mathbb{H}^{n},
\end{equation*}
where $\omega(x, y)$ is a positive continuous function (weight function) on $\mathbb{H}^n$, $\alpha \geq 0$, and $dx\ dy$ the Lebesgue measure on $\mathbb{H}^n$. It is known that $\bHarmAlpha(\mathbb{H}^n, \omega^\alpha)$ is reproducing kernel Hilbert space possessing uniquely determined function $\RAlpha: \mathbb{H}^n \times \mathbb{H}^n \to \mathbb{R}$, called \emph{harmonic Bergman kernel}, with the reproducing property 
\begin{equation*}
f(a, b) = \int_{\mathbb{H}^n} f(x, y) \RAlpha(a, b; x, y)\ \omega(x, y)^\alpha\ dx\ dy, \quad \forall f \in \bHarmAlpha(\mathbb{H}^n, \omega^\alpha).
\end{equation*}
For more details on the harmonic Bergman spaces and their kernels, see for example \cite{Axler2001} for the unweighted case $\omega^\alpha \equiv 1$. Further, consider the integral transform of bounded function $f$ on $\mathbb{H}^n$ of the form
\begin{equation}\label{eq:harmBerezinTr}
(\BHarmAlpha f)(a, b) = \frac{1}{\RAlpha(a, b; a, b)} \int_{\mathbb{H}^n} f(x, y) \RAlpha(a, b; x, y)^2\ \omega(x, y)^\alpha\ dx\ dy,
\end{equation}
called the \emph{harmonic Berezin transform}.

The above originates from the Berezin-Toeplitz quantization on the spaces of holomorphic, rather than harmonic, functions on some open set $\Omega \subset \mathbb{C}^n$, which has many applications in mathematical physics, see \cite{Berezin1974, MRBerezin1975, Bordemann1994, Coburn1992, Karabegov2001} for quantization on K\"{a}hler manifolds. It has been shown in \cite{Englis2006} that Berezin-Toeplitz quantization cannot be achieved on the harmonic Bergman spaces. Nevertheless, it appears that the harmonic Berezin transform still possesses an asymptotic behaviour but there are no known conditions on the weight to guarantee the asymptotic. In \cite{ME2016}, some natural conditions for weights depending only on the vertical coordinate on the half-space and for weights radial on the ball are formulated albeit it is not clear if these are sufficient. 

For convenience, we employ the following notation. By $ \bHolAlpha(\Omega, \omega^\alpha) $ we denote weighted holomorphic Bergman space, that is the space of all holomorphic functions on an open set $ \Omega \subset \mathbb{C}^n $ square integrable with respect to the weight function $ \omega^\alpha $, $ \KAlpha $ the corresponding holomorphic Bergman kernel of $ \bHolAlpha(\Omega, \omega^\alpha) $, and its holomorphic Berezin transform by $ \BHolAlpha $. Similarly for the harmonic case, let $ \bHarmAlpha(\Omega, \omega^\alpha) $ be the space of all harmonic functions on an open set $\Omega \subset \mathbb{R}^n $ square integrable with respect to the weight function $ \omega^\alpha $, $ \RAlpha $ its corresponding harmonic Bergman kernel  and $ \BHarmAlpha $ the harmonic Berezin transform.

Compared with the holomorphic case, only a handful of explicit formulas or asymptotic expansions for Bergman kernel and Berezin transform in harmonic settings are known, see \cite{Coifman1980, Miao1998, Jevtic1999, Liu2007, Otahalova2008}. The harmonic Fock space $\bHarmAlpha(\mathbb{R}^n, c_{\alpha} e^{-\alpha |x|^2})$, $\alpha > 0$, is considered in \cite{ME2010}; in \cite{PB2014}, the space of harmonic functions on the unit ball $\bHarmAlpha(\mathbb{B}^n, c_{\alpha}(1-|x|^2)^{\alpha})$, $\alpha > -1$, with $c_{\alpha}$ being a normalizing factor so that in both cases the underlying set has measure $1$. In \cite{JJ2013, Koo2006}, the space of harmonic functions on the upper half-space $\bHarmAlpha(\mathbb{H}^n, y^\alpha)$, $\alpha \geq 0$, $y$ being the vertical coordinate, is considered.

In \cite{ME2016}, which was the main source of inspiration, two of the above mentioned cases are considered in more general settings. For weight function being radial on the unit ball, $\omega(x) = \rho(|x|)$, $x \in \mathbb{B}^n$, and weight function being function of only vertical coordinate on the upper half-space, $\omega(x, y) = \rho(y)$, $(x, y) \in \mathbb{H}^n$, the asymptotic behaviour of the corresponding harmonic Bergman kernels on the diagonal with respect to the weight function $\omega^\alpha$ as $\alpha$ tends to infinity  is given. The main result for the half-space in \cite{ME2016} is the following (for the purpose of this paper, the definition below was extracted from the main theorem):
\begin{definition*}
A positive function $\rho$ on $[0, \infty)$ is \emph{suitable} if it satisfies the following conditions:
\begin{itemize}
  \item $\rho$ makes $\omega$ a defining function, i.e. $\rho(0) = 0$ and $\rho'(0) > 0$;
  \item $\rho$ does not decay too rapidly, i.e. $\int_0^\infty e^{ty} \rho(y)\ dy = +\infty$ for all $t>0$;
  \item $\rho' > 0$ and $(\rho' / \rho)' < 0$ on $(0, \infty)$;
  \item and finally $\rho$ is smooth.
\end{itemize}
\end{definition*}

\begin{theorem*}[\cite{ME2016}, Theorem 3]
Let $\Omega=\mathbb{H}^n$, $n \geq 2$, and assume that $\omega(x, y) = \rho (y)$ depends only on the vertical coordinate and $\rho$ is suitable. Then
\begin{equation*}
\lim_{\alpha \to \infty} \alpha^{1-n} \rho(y)^\alpha \RAlpha(x, y; x, y) = \frac{2^{3-2n}}{\pi^{\frac{n-1}{2}} \Gamma\left( \frac{n-1}{2} \right)} \left( \frac{\rho'}{\rho} \right)^{n-2} \left( -\frac{\rho'}{\rho} \right)'.
\end{equation*}
\end{theorem*}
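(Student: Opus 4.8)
The plan is to exploit the translation invariance of $\mathbb{H}^n$ in the horizontal variable and to diagonalize the problem by a partial Fourier transform, reducing the $n$-dimensional reproducing-kernel problem to a one-parameter family of one-dimensional ones, and then to extract the $\alpha\to\infty$ behaviour by Laplace's method. First I would Fourier transform a harmonic $f$ in the horizontal variable, $\hat f(\xi,y)=\int_{\mathbb{R}^{n-1}}f(x,y)e^{-ix\cdot\xi}\,dx$. Harmonicity $\Delta f=0$ becomes $\partial_y^2\hat f=|\xi|^2\hat f$, whose solutions are spanned by $e^{\pm|\xi|y}$. Because $\rho$ is increasing (guaranteed by $\rho'>0$ together with the non-decay hypothesis), $e^{|\xi|y}\notin L^2((0,\infty),\rho^\alpha\,dy)$, so for each fixed $\xi$ the admissible profile is one-dimensional, spanned by $\phi_\xi(y)=e^{-|\xi|y}$ whenever $\|\phi_\xi\|^2=\int_0^\infty e^{-2|\xi|s}\rho(s)^\alpha\,ds<\infty$. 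The reproducing kernel of this one-dimensional fibre is $\phi_\xi(y)\phi_\xi(y')/\|\phi_\xi\|^2$, and assembling the fibres via Plancherel gives
\[
\RAlpha(x,y;x',y')=\frac{1}{(2\pi)^{n-1}}\int_{\mathbb{R}^{n-1}}e^{i(x-x')\cdot\xi}\,\frac{e^{-|\xi|(y+y')}}{\int_0^\infty e^{-2|\xi|s}\rho(s)^\alpha\,ds}\,d\xi ,
\]
which one checks satisfies the reproducing property directly. On the diagonal, passing to polar coordinates $r=|\xi|$ (with $|S^{n-2}|=2\pi^{(n-1)/2}/\Gamma(\tfrac{n-1}{2})$) yields
\[
\RAlpha(x,y;x,y)=\frac{2\pi^{(n-1)/2}}{(2\pi)^{n-1}\Gamma(\tfrac{n-1}{2})}\int_0^\infty\frac{e^{-2ry}\,r^{n-2}}{\int_0^\infty e^{-2rs}\rho(s)^\alpha\,ds}\,dr .
\]

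Next I would perform a two-scale Laplace analysis as $\alpha\to\infty$. Writing $\rho^\alpha=e^{\alpha\log\rho}$ and rescaling $r=\alpha u$, the normalizing integral becomes $\int_0^\infty e^{\alpha(\log\rho(s)-2us)}\,ds$, a Laplace integral whose phase $g(s)=\log\rho(s)-2us$ has a unique nondegenerate interior maximum $s^\ast(u)$ determined by $\rho'(s^\ast)/\rho(s^\ast)=2u$; existence, uniqueness and nondegeneracy are exactly what suitability provides, since $\rho(0)=0,\ \rho'(0)>0$ force $\rho'/\rho\to+\infty$ at $0$, while $(\rho'/\rho)'<0$ makes $\rho'/\rho$ strictly decreasing with $g''(s^\ast)=(\rho'/\rho)'(s^\ast)<0$. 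After multiplying by $\rho(y)^\alpha$ and inserting the inner Laplace expansion, the outer $u$-integrand acquires the exponent $\alpha h(u)$ with $h(u)=\log\rho(y)-\log\rho(s^\ast(u))+2u\,(s^\ast(u)-y)$; the envelope identity gives the clean derivative $h'(u)=2(s^\ast(u)-y)$, so that $h$ is maximized precisely where $s^\ast(u)=y$, i.e. at $u^\ast=\tfrac12\rho'(y)/\rho(y)$, with $h(u^\ast)=0$ and $h''(u^\ast)=4/(\rho'/\rho)'(y)<0$.

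Finally I would collect the Gaussian factors and the powers of $\alpha$. The normalizing integral, appearing in the denominator, contributes $\alpha^{1/2}$, the Jacobian $dr=\alpha\,du$ and the factor $r^{n-2}=\alpha^{n-2}u^{n-2}$ contribute $\alpha^{n-1}$, and the outer Laplace contributes $\alpha^{-1/2}$, giving total order $\alpha^{n-1}$; thus $\alpha^{1-n}\rho(y)^\alpha\RAlpha(x,y;x,y)$ tends to a finite limit. Substituting $u^\ast=\tfrac12\rho'/\rho$, $g''(s^\ast)=(\rho'/\rho)'(y)$ and $h''(u^\ast)=4/(\rho'/\rho)'(y)$ into the product of the two Laplace prefactors, the two square roots combine to $\tfrac12(-(\rho'/\rho)'(y))$, the factor $(u^\ast)^{n-2}$ produces $(\rho'/\rho)^{n-2}$, and after simplifying the constant $2\pi^{(n-1)/2}/((2\pi)^{n-1}\Gamma(\tfrac{n-1}{2}))$ one obtains exactly $\frac{2^{3-2n}}{\pi^{(n-1)/2}\Gamma(\tfrac{n-1}{2})}(\rho'/\rho)^{n-2}(-(\rho'/\rho)')$, as claimed.

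The main obstacle is not the formal saddle-point computation but its rigorous justification: I must make the nested Laplace expansion uniform in $r$ over the relevant window $r\sim\alpha u^\ast$, control the tails where $s^\ast$ leaves a neighbourhood of $y$ (including the regime $r\le\alpha L/2$, $L=\lim_{s\to\infty}\rho'/\rho$, where the fibre norm diverges and the frequency simply drops out), and show that the contributions from $|\xi|$ small and $|\xi|\to\infty$ are negligible. Establishing this uniformity—essentially a Laplace-method-with-parameter estimate with error bounds depending continuously on $u$—is where the suitability conditions, particularly the global monotonicity of $\rho'/\rho$, do the real work.
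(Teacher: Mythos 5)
Your proposal is correct, and the bookkeeping checks out: with $r=\alpha u$ the inner phase $g(s)=\log\rho(s)-2us$ has its unique nondegenerate maximum at $\rho'(s^\ast)/\rho(s^\ast)=2u$, the envelope identity $h'(u)=2(s^\ast(u)-y)$ together with $s^{\ast\prime}=2/(\rho'/\rho)'(s^\ast)<0$ gives the unique outer maximum at $u^\ast=\tfrac12\rho'/\rho(y)$ with $h''(u^\ast)=4/(\rho'/\rho)'(y)$, the ratio of Gaussian prefactors is $\sqrt{(-g''(y))/(-h''(u^\ast))}=\tfrac12\bigl(-(\rho'/\rho)'\bigr)$, and $(u^\ast)^{n-2}=2^{2-n}(\rho'/\rho)^{n-2}$ combined with the constant $2^{2-n}\pi^{(1-n)/2}/\Gamma(\tfrac{n-1}{2})$ from polar coordinates indeed yields $2^{3-2n}\pi^{(1-n)/2}\Gamma(\tfrac{n-1}{2})^{-1}(\rho'/\rho)^{n-2}\bigl(-(\rho'/\rho)'\bigr)$. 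However, your route is genuinely different from the one in this paper. Here the statement is quoted from \cite{ME2016}, and the paper's own machinery recovers (and strengthens) it not by saddle-point analysis but by transplantation into the holomorphic category: the identity $\RAlpha(a,b;a,b)=2^{4-2n}\omega_{n-1}\KZeroAlpha(ib;ib)$ of Corollary~\ref{corollary:RAlphaOnDiagonal} reduces the diagonal behaviour to the weighted Bergman kernel on the Siegel domain $\mathcal{S}^{n-1}$, whose full asymptotic expansion is imported wholesale from \cite[Theorem 1]{ME2002} (strict pseudoconvexity of $\mathcal{S}^{n-1}$ being exactly equivalent to the suitability conditions $\rho'>0$, $(\rho'/\rho)'<0$); the limit then falls out of the leading coefficient $b_0=\det[\partial\bar\partial\log(1/\omega)]$. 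What each approach buys: yours is elementary and self-contained, treats all $n\geq 2$ on the diagonal uniformly (the paper must split $n>2$ and $n=2$ off the diagonal), and does not rely on deep holomorphic kernel asymptotics --- but, as you rightly flag, the uniform-in-parameter Laplace estimates, the tail control, and the threshold regime $2u\downarrow L$ constitute the real analytic work, and this is in substance the original proof in \cite{ME2016} from which the theorem is cited. The paper's reduction, by contrast, outsources all of that analysis to \cite{ME2002} and in exchange obtains the complete asymptotic series with all coefficients $c_j$ (Theorem~\ref{th:BergmanKernel}), not merely the leading term, and off-diagonal information as well. One small caveat in your fibre decomposition: the claim that every element of the space has Fourier profile proportional to $e^{-|\xi|y}$ and that the assembled kernel reproduces needs a Paley--Wiener-type justification (this is precisely Lemma~\ref{lemma:BergamanKernelForHalfSpace}, taken from \cite{ME2016}), but that is routine compared to the uniformity issue you already identified.
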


We extend this result and provide full asymptotic expansion of the weighted harmonic Bergman kernel on the half-space for weights depending only on the vertical coordinate, as well as full asymptotic expansion of the harmonic Berezin transform for functions depending also only on the vertical coordinate. Both expansions are found utilizing the known holomorphic case for the \emph{Siegel domain} $\mathcal{S}^{n-1} = \{ (z, x + iy) \in \mathbb{C}^{n-2} \times \mathbb{C} : y > |z|^2 \}$.

Before we state the results, we fix the following notation. The symbol $\approx$ stands for the usual Poincar\'e asymptotic expansion, i.e. $f(\alpha) \approx \sum_{i=0}^\infty c_i \alpha^{-i}$ if and only if for all $N=0, 1, ...$ we have $f(\alpha) - \sum_{i=0}^{N-1} c_i \alpha^{-i} = O(\alpha^{-N})$ as $\alpha$ tends to infinity. By $\omega_n$ we denote the area of the unit sphere $\mathbb{S}^{n-1}$ in $\mathbb{R}^{n}$, i.e. $\omega_n = \frac{2\pi^{n/2}}{\Gamma\left(n/2\right)}.$ This shall cause no confusion with weight function $\omega$ which is always without the subscript. Also, whenever we mention the function $\rho$, we always assume $\rho$ is suitable. And finally we introduce operator
$$ I_t F = 2^{2-2n}\omega_{n-2} \int_{-1}^{1} (1-t^2)^{\frac{n-4}{2}} F(t)\  dt,\qquad n>2,$$
and given suitable $\rho$ we define function
$$ Q = \left( \frac{\rho'}{\rho} \right)^{n-2} \left(-\frac{\rho'}{\rho} \right)'.$$

\begin{theorem}\label{th:BergmanKernel}
For $\bHarmAlpha(\mathbb{H}^n, \omega^\alpha)$, $\alpha \geq 0$, $ \omega(x, y) = \rho(y) $, with $\rho$ being suitable, the asymptotic expansion of harmonic Bergman kernel $\RAlpha$ as $\alpha$ tends to infinity is given by 
\begin{align}
\intertext{case $n>2$:}
\label{eq:ThRAlphaNg2} \RAlpha(x, y; a, b) &\approx \frac{\alpha^{n-1}}{4\pi^{n-1}} I_t \rho(T)^{-\alpha} Q(T) \sum_{j=0}^{\infty} \frac{c_j(T)}{\alpha^j}
\intertext{evaluated at $T=\displaystyle\frac{y+b}{2} + \frac{|x-a|t}{2i}$,}
\intertext{case $n=2$:}
\label{eq:ThRAlphaNe2} \RAlpha(x, y; a, b) &\approx \frac{\alpha^{n-1}}{4\pi^{n-1}} \left(\rho(T)^{-\alpha} Q(T) \sum_{j=0}^{\infty} \frac{c_j(T)}{\alpha^j} + \overline{\rho(T)^{-\alpha}}\ \overline{Q(T)} \sum_{j=0}^{\infty} \frac{\overline{c_j(T)}}{\alpha^j}\right)
\end{align}
evaluated at $T=\displaystyle\frac{y+b}{2} + \frac{x-a}{2i}$.
\\For both cases, coefficients $c_j \in C^{\infty}(\mathcal{S}^{n-1} \times \mathcal{S}^{n-1})$  are almost analytic at $(z, w) = (Z, W)$ with $c_0 \equiv 1$.
\end{theorem}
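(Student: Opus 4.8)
The plan is to exploit the invariance of the weight $\rho(y)^\alpha$ under horizontal translations, to reduce the kernel to a one–dimensional problem, and then to match that problem against the holomorphic Bergman kernel of the Siegel domain, whose asymptotic expansion we take as known.

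First I would pass to the partial Fourier transform in the horizontal variable $x\in\mathbb{R}^{n-1}$. Because $\rho$ depends only on $y$, every element of $\bHarmAlpha(\mathbb{H}^n,\omega^\alpha)$ has Fourier transform concentrated on the decaying harmonic modes, so $\hat u(\xi,y)=c(\xi)\,e^{-|\xi|y}$; the Bergman projection is thereby diagonalized, and at each frequency the reproducing kernel of the one–dimensional weighted space spanned by $y\mapsto e^{-|\xi|y}$ is $e^{-|\xi|(y+b)}/m_\alpha(|\xi|)$, where $m_\alpha(r)=\int_0^\infty e^{-2rs}\rho(s)^\alpha\,ds$. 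Inverting the transform and writing $\xi=r\theta$ in polar coordinates, the angular integration over $\theta\in\mathbb{S}^{n-2}$ is precisely the operator $I_t$ together with the evaluation at the complex argument $T$, so that for $n>2$
\[
\RAlpha(x,y;a,b)=C_n\, I_t\int_0^\infty\frac{r^{n-2}\,e^{-2rT}}{m_\alpha(r)}\,dr,\qquad T=\tfrac{y+b}{2}+\tfrac{|x-a|t}{2i},
\]
with $C_n$ an explicit constant; for $n=2$ there is no sphere and $I_t$ is replaced by the raw one–dimensional inversion.

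Next I would identify the inner integral with a Fourier–Laplace slice of the holomorphic Bergman kernel $\KAlpha$ of the Siegel domain $\mathcal{S}^{n-1}$. By the Paley–Wiener description of $\bHolAlpha(\mathcal{S}^{n-1},\omega^\alpha)$ the same Laplace transform $m_\alpha$ controls the vertical direction while the $z$–variables carry a Fock kernel; tracing through the Fourier transforms and the reduced kernel $\KZeroAlpha$ shows that $\int_0^\infty r^{n-2}e^{-2rT}/m_\alpha(r)\,dr$ coincides, up to an explicit constant, with $\KZeroAlpha$ on the diagonal $z=w$ at height $T$. I may then import the known holomorphic expansion on $\mathcal{S}^{n-1}$, which furnishes the factor $\rho(T)^{-\alpha}Q(T)$, the power $\alpha^{n-1}$, and the coefficients $c_j$ with $c_0\equiv1$. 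Reassembling: for $n>2$ the average $I_t$ acts on this expansion and, evaluated at $T$, gives \eqref{eq:ThRAlphaNg2}; for $n=2$ the frequency line splits into positive and negative halves which under $\zeta=x+iy$ are the holomorphic and anti–holomorphic modes, the negative half contributing the complex conjugate of the positive one, whence the two conjugate sums of \eqref{eq:ThRAlphaNe2}. In both cases the $c_j$ are inherited verbatim from the Siegel expansion, so they lie in $C^\infty(\mathcal{S}^{n-1}\times\mathcal{S}^{n-1})$ and are almost analytic on the diagonal.

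The main obstacle I expect is the analytic justification that the Poincar\'e expansion survives the operations performed on it: interchanging the asymptotic series with the integral $I_t$, continuing analytically from the real diagonal to the complex height $T$ (generally off the real axis), and passing from $m_\alpha$ to its reciprocal. This needs uniform-in-$T$ remainder estimates for the holomorphic expansion on a complex neighborhood of the relevant diagonal, together with control of the $\bar\partial$–defect of the almost analytic coefficients $c_j$, whose contribution must be shown to be $O(\alpha^{-\infty})$ and hence invisible to the expansion. Securing these uniform bounds, and verifying that replacing the true coefficients by their almost analytic extensions costs only a rapidly decreasing error, is the technical heart of the proof.
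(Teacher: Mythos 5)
Your proposal follows essentially the same route as the paper: partial Fourier transform in $x$ diagonalizes the projection and yields the kernel formula with $\rhoTildeAlpha$ (which the paper cites from Engli\v{s}'s earlier work rather than rederiving), the angular integral over $\mathbb{S}^{n-2}$ becomes, via the ${}_0F_1$ Gegenbauer representation, exactly $I_t$ applied to $\KZeroAlpha(|x-a|t+iy;ib)$, the Engli\v{s} expansion for the strictly pseudoconvex Siegel domain is then inserted, and for $n=2$ the positive/negative frequency split gives $\RAlpha = 2\Re\KZeroAlpha$, matching the paper's cosine argument. The only substantive difference is one of emphasis: you flag the uniform-in-$T$ remainder estimates and the $\bar\partial$-defect of the almost analytic extensions as the technical heart, whereas the paper treats these as already packaged in the cited holomorphic expansion (valid uniformly near the diagonal, with $I_t$ an integral over the compact interval $[-1,1]$), and you omit only the observation that the suitability conditions $\rho'>0$ and $(\rho'/\rho)'<0$ are precisely what make the Siegel domain strictly pseudoconvex and hence license that citation.
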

Theorem~\ref{th:BergmanKernel} requires some clarifications. Function $Q$ as well as suitable function $\rho$ are defined on the non-negative part of real axis while we evaluate them in (\ref{eq:ThRAlphaNg2}) and (\ref{eq:ThRAlphaNe2}) in right half-plane of the complex plane. We denoted by the same letter $\rho$ its fixed almost-analytic extension to a neighbourhood of the positive real half-axis. For more details, see the proof of the theorem.

\begin{theorem}\label{th:BerezinTransform}
For $f \in L^\infty(\mathbb{H}^n) \cap C^\infty(\mathbb{H}^n)$ depending only on the vertical coordinate, i.e. $f(x, y) = g(y)$ for some function $g$ which is bounded and smooth on $(0, \infty)$, the harmonic Berezin transform $\BHarmAlpha$ for $ \bHarmAlpha(\mathbb{H}^n, \omega^\alpha) $, $\alpha \geq 0$, $\omega(x, y) = \rho(y)$, $\rho$ being suitable, is given by
\begin{equation*}
(\BHarmAlpha f)(a, b) = \sum_{j=0}^{\infty} Q_j f(a, b),
\end{equation*}
where $Q_j$ are cerain linear differential operators whose coefficients involve only derivatives of $\rho$ at the point $b$. The first three terms are given by
\begin{align*}
Q_0 &= I, \\
Q_1 &= \tilde{\Delta} = \frac{4}{\phi''(b)} \frac{\partial^2}{\partial\bar{z}_{n-1} \partial z_{n-1}} + \sum_{k=1}^{n-2} \left( -\frac{1}{\phi'(b)} \right) \frac{\partial^2}{\partial\bar{z}_k \partial z_k} \\
Q_2 &= \frac{1}{2} \tilde{\Delta}^2 + 2 \frac{\psi''(b)}{\phi''(b)^2} \frac{\partial^2}{\partial\bar{z}_{n-1} \partial z_{n-1}} + \frac{1}{2} \sum_{j=1}^{n-2} -\frac{\psi'(b)}{\phi'(b)^2} \frac{\partial^2}{\partial \bar{z}_j \partial z_j},
\end{align*}
where $I$ is identity operator, $\phi = -\log(\rho)$, and $\psi = \log\left(\frac{\phi''}{4} (-\phi')^{n-2}\right)$.
\end{theorem}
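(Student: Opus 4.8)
The plan is to reduce the harmonic Berezin transform of a vertical function to the holomorphic Berezin transform $\BHolAlpha$ on the Siegel domain $\mathcal{S}^{n-1}$, whose semiclassical expansion is known, and then to read off the operators $Q_j$ from the K\"ahler geometry attached to the potential $\phi=-\log\rho$. The hypothesis $f(x,y)=g(y)$ is exactly what makes this possible: it lifts to the height function $\tilde f(z,z_{n-1})=g(r)$ on $\mathcal{S}^{n-1}$, where $r=\Im z_{n-1}-|z|^2$, so that the entire computation stays among functions of the single height variable $r$. By translation invariance in the $x$-variables I may assume $a=0$, which corresponds to the base point $z=0$, $z_{n-1}=ib$ in $\mathcal{S}^{n-1}$, of height $r=b$; the operators are allowed to depend on $b$ through $\rho$, so no scale normalization is needed or available.

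First I would insert the kernel expansion of Theorem~\ref{th:BergmanKernel} into
$$(\BHarmAlpha f)(a,b)=\frac{1}{\RAlpha(a,b;a,b)}\int_{\mathbb{H}^n} g(y)\,\RAlpha(a,b;x,y)^2\,\rho(y)^\alpha\,dx\,dy,$$
and pass to polar coordinates in $x-a$, since the kernel depends on $x$ only through $|x-a|$; this produces the factor $\omega_{n-1}$ and a radial integral. Because the harmonic kernel is real, I would write $\RAlpha^2=\RAlpha\,\overline{\RAlpha}$, so that squaring turns the single average $I_t$ of Theorem~\ref{th:BergmanKernel} into a double average $I_tI_{t'}$ carrying the weights $\rho(T)^{-\alpha}\,\overline{\rho(T')^{-\alpha}}$ at the two evaluation points $T,T'$. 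This is precisely the $|\KAlpha|^2$-type integrand obtained by writing $\BHolAlpha\tilde f$ on $\mathcal{S}^{n-1}$ and then performing the same $I_t$-reduction that produced the harmonic kernel from the holomorphic one; the weight $(1-t^2)^{(n-4)/2}$ inside $I_t$ is what encodes the reduction of the Siegel-domain integration to the half-space.

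Next I would extract the expansion by Laplace's method. As $\alpha\to\infty$ the integrand concentrates at $(x,y)=(a,b)$ with $t=t'=0$, where the two evaluation points coincide, $T=T'=b$; suitability of $\rho$ (equivalently $-\phi'>0$ and $\phi''>0$) makes this a nondegenerate critical point with positive-definite Hessian, so the Gaussian integrals converge and $t,t'$ are simultaneously forced to $0$. Rescaling the fluctuations by $\sqrt\alpha$ and integrating the Gaussian moments order by order yields the series of the statement, each $Q_j$ arising from the moments of order $2j$ and thus carrying the expected $\alpha^{-j}$, while the normalization against $\RAlpha(a,b;a,b)$ gives $Q_0=I$. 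To pin down the operators I would compute $g_{i\bar\jmath}=\partial_{z_i}\partial_{\bar z_j}\phi$ at the base point, obtaining $g^{k\bar k}=-1/\phi'(b)$ for $k=1,\dots,n-2$ and $g^{(n-1)\overline{(n-1)}}=4/\phi''(b)$, so that the first-order term is precisely the K\"ahler Laplacian $\tilde{\Delta}=g^{i\bar\jmath}\partial_{z_i}\partial_{\bar z_j}$. The remaining coefficients are the universal ones of the holomorphic Berezin expansion: $Q_2$ carries the invariant part $\tfrac12\tilde{\Delta}^2$ together with a density correction governed by $\psi=\log\det g=\log\!\big(\tfrac{\phi''}{4}(-\phi')^{n-2}\big)$, and matching this density term reproduces the stated second-order operator.

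The hard part will be the rigorous justification of this bridge together with the double localization. One must show that the $I_tI_{t'}$ integral genuinely concentrates at $t=t'=0$ and reconstitutes the holomorphic integrand, control the almost-analytic extensions of $\rho$, $Q$, and the coefficients $c_j$ into the complex $T$-plane uniformly in $\alpha$, and bound the tails from large $|x-a|$ and from $t,t'$ bounded away from $0$, so that the Poincar\'e expansion of Theorem~\ref{th:BergmanKernel} may be integrated term by term. The case $n=2$ requires a parallel treatment because of the extra conjugate term in \eqref{eq:ThRAlphaNe2}; since each $Q_j$ is assembled from the real operators $\partial_{\bar z_k}\partial_{z_k}$, the holomorphic and antiholomorphic contributions recombine into the same operators and no new terms appear.
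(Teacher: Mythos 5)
Your reduction target is the right one, and your endgame (the lift $h(z,x+iy)=g(y-|z|^2)$, the metric $g^{k\bar k}=-1/\phi'(b)$, $g^{(n-1)\overline{(n-1)}}=4/\phi''(b)$, and the resulting $Q_1$, $Q_2$) matches the paper exactly. But the central mechanism of your proof --- inserting the near-diagonal expansion of Theorem~\ref{th:BergmanKernel} into the Berezin integral, squaring it into a double average $I_tI_{t'}$, and extracting the series by Laplace's method --- has a genuine gap that you yourself flag but do not close, and it is not closable with the tools in the paper. The expansion \eqref{eq:ThRAlphaNg2}--\eqref{eq:ThRAlphaNe2} is a Poincar\'e expansion valid only \emph{near the diagonal} $(x,y)=(a,b)$, while the Berezin transform integrates $\rho(y)^\alpha\RAlpha(x,y;a,b)^2$ over all of $\mathbb{H}^n$. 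To integrate the expansion term by term you would need off-diagonal decay estimates for $\RAlpha$, uniform in $\alpha$, showing the tail contribution is $O(\alpha^{-\infty})$ after normalization; no such estimates are established anywhere in the paper, and producing them (together with uniform control of the almost-analytic extension errors $O(|\Im T|^\infty)$ in the complex $T$-plane) is the notoriously hard part of any direct Laplace-method proof. Moreover, your identification of the $Q_2$ coefficients as ``the universal ones of the holomorphic Berezin expansion'' is circular within your scheme: if you are deriving the expansion from Gaussian moments, you must compute those moments; you may only \emph{cite} the universal coefficients once the problem has been honestly reduced to the holomorphic transform.

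The paper avoids all of this by proving an \emph{exact}, non-asymptotic identity: $(\BHarmAlpha f)(a,b)=(\BHolAlpha h)(0,ib)$ for every $\alpha$. This is done not through the kernel expansions at all, but through the Fourier representations of Lemmas~\ref{lemma:BergamanKernelForSiegelDom} and~\ref{lemma:BergamanKernelForHalfSpace}: Parseval's identity in the $x$-variables turns both $\int|\KAlpha|^2\,dx$ on the Siegel domain and $\int_{\mathbb{R}^{n-1}}|\RAlpha|^2\,dx$ on the half-space into the \emph{same} radial integral $\int_0^\infty r^{n-2}e^{-2(b+y)r}\rhoTildeAlpha(r)^{-2}\,dr$ (after a Gaussian integration over $z\in\mathbb{C}^{n-2}$ and the substitution $y\mapsto y-|z|^2$ on the Siegel side), and the normalizing constants cancel precisely by Corollary~\ref{corollary:RAlphaOnDiagonal}. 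Only after this exact identity does the paper invoke the known expansion of $\BHolAlpha$ on strictly pseudoconvex domains (Theorems 2 and 11 of \cite{ME2002}), which legitimately supplies the operators $Q_j$ and their universal form; the explicit $Q_1$, $Q_2$ then follow from the diagonal metric as you computed. In short: the missing idea in your proposal is that no localization argument is needed --- Parseval collapses both Berezin integrals to a common expression --- and without that idea the tail and uniformity problems you defer to ``the hard part'' remain unresolved and sink the proof as written.
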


In contrast to Theorem~\ref{th:BergmanKernel}, the harmonic Berezin transform in Theorem~\ref{th:BerezinTransform} takes one form regardless of the dimension of $\mathbb{H}^n$ due to the properties of the harmonic Bergman kernel on the diagonal stated explicitly in Corollary~\ref{corollary:RAlphaOnDiagonal}  below.

The rest of the paper is structured in the following way: Section~\ref{sec:BergmanKernel} contains the proof of Theorem~\ref{th:BergmanKernel}, Section~\ref{sec:BerezinTransform} contains the proof of Theorem~\ref{th:BerezinTransform}, and in Section~\ref{sec:Verification}, we recover some results from \cite{JJ2013} by applying Theorem~\ref{th:BergmanKernel} and Theorem~\ref{th:BerezinTransform}.

\section{Bergman kernel}\label{sec:BergmanKernel}

\begin{lemma}\label{lemma:BergamanKernelForSiegelDom}
For the \emph{Siegel domain} $ \mathcal{S}^{n-1} = \{ (z, x + iy) \in \mathbb{C}^{n-2} \times \mathbb{C} : y > |z|^2 \} $ and the weight $ \omega(z, x + iy) = \rho(y - |z|^2) $, the Bergman kernel of $ \bHolAlpha(\mathcal{S}^{n-1}, \omega^\alpha) $ is given by
\begin{equation}\label{eq:KAlpha_Lemma}
\KAlpha(z, x + iy; w, a + ib) = \frac{1}{2\pi} \int_0^\infty \left(\frac{2\xi}{\pi}\right)^{n-2} \frac{e^{i(x-a)\xi - (b+y)\xi + 2\xi \langle z, w \rangle}}{\rhoTildeAlpha(\xi)}\ d\xi,
\end{equation}
where $\rhoTildeAlpha(t) = \int_0^\infty \rho(y)^\alpha\ e^{-2ty}\ dy$. Moreover, the Fourier transform of the kernel $\KAlpha$ with respect to the variable $x$ is supported on $\xi > 0$ and takes the form
\begin{equation}\label{eq:FourierKAlpha_Lemma}
\FourierKAlpha(\xi) = \left(\frac{2\xi}{\pi}\right)^{n-2}\frac{e^{-ia\xi-(b+y)\xi+ 2\xi \langle z, w \rangle}}{\rhoTildeAlpha(\xi)}.
\end{equation}
\end{lemma}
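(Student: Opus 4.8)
The plan is to diagonalize $\bHolAlpha(\mathcal{S}^{n-1}, \omega^\alpha)$ by a partial Fourier--Laplace transform in the real variable $x$, exploiting that both the domain $\mathcal{S}^{n-1}$ and the weight $\rho(y-|z|^2)^\alpha$ are invariant under the translations $x \mapsto x + t$, $t \in \mathbb{R}$. First I would expand an arbitrary $f \in \bHolAlpha(\mathcal{S}^{n-1}, \omega^\alpha)$ as $f(z, x+iy) = \frac{1}{2\pi}\int_{\mathbb{R}} g(z, \xi, y)\, e^{ix\xi}\, d\xi$ and feed this into the Cauchy--Riemann equation in the last complex coordinate; the resulting relation $\partial_y g = -\xi g$ forces $g(z, \xi, y) = e^{-\xi y} h(z, \xi)$ with each $h(\cdot, \xi)$ entire in $z$. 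This yields the Paley--Wiener representation $f(z, x+iy) = \frac{1}{2\pi}\int_{\mathbb{R}} h(z, \xi)\, e^{i(x+iy)\xi}\, d\xi$, whose spectrum I expect to be confined to $\xi > 0$.

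The second step is to compute $\|f\|^2$. Applying Plancherel in $x$ and then the substitution $s = y - |z|^2$, which straightens the domain to $s > 0$, the squared norm factors as an iterated integral in which the $s$-integral produces exactly $\rhoTildeAlpha(\xi) = \int_0^\infty \rho(s)^\alpha e^{-2\xi s}\, ds$. Since $\rho$ is suitable, $\rhoTildeAlpha(\xi)$ is finite precisely for $\xi > 0$, and the remaining Gaussian factor $e^{-2\xi|z|^2}$ in the $z$-integral is likewise integrable only for $\xi > 0$; together these confirm that the spectrum lies in $\xi > 0$ and give $\|f\|^2 = \frac{1}{2\pi}\int_0^\infty \left(\int_{\mathbb{C}^{n-2}} |h(z, \xi)|^2 e^{-2\xi|z|^2}\, dV(z)\right) \rhoTildeAlpha(\xi)\, d\xi$. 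This exhibits $f \mapsto h$ as a unitary isomorphism (up to the constant $\frac{1}{2\pi}$) of $\bHolAlpha(\mathcal{S}^{n-1}, \omega^\alpha)$ onto the direct integral over $\xi > 0$ of the Fock spaces of entire functions on $\mathbb{C}^{n-2}$ square-integrable against $e^{-2\xi|z|^2}\, dV(z)$, with spectral density $\rhoTildeAlpha(\xi)\, d\xi$.

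Thirdly, I would invoke the classical Fock-space reproducing kernel $(2\xi/\pi)^{n-2} e^{2\xi\langle z, w\rangle}$ on each fibre and assemble the total kernel fibrewise: writing $\KAlpha$ through the same representation, with fibre component $(2\xi/\pi)^{n-2} e^{2\xi\langle z, w\rangle}\, \overline{e^{i(a+ib)\xi}} / \rhoTildeAlpha(\xi)$ integrated against $\rhoTildeAlpha(\xi)\, d\xi$, the fibrewise reproducing property of the Fock kernel collapses the $\rhoTildeAlpha$ factors and recovers $f(z, x+iy)$, thereby verifying the reproducing property and yielding \eqref{eq:KAlpha_Lemma}. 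Finally, reading off the coefficient of $e^{ix\xi}$ in this representation gives the Fourier transform in $x$, namely \eqref{eq:FourierKAlpha_Lemma}, which is manifestly supported on $\xi > 0$.

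I expect the main obstacle to be making the first two steps rigorous rather than merely formal: establishing that $f \mapsto h$ is genuinely an onto isometry, in particular that the Fourier spectrum of every element is carried by $\xi > 0$ (a Paley--Wiener statement adapted to the tube/Siegel geometry), and justifying the interchange of integrations together with the change of variables $s = y - |z|^2$ uniformly in the parameters. Once the direct-integral decomposition is secured, identifying the kernel and checking reproduction are routine, since the reproducing kernel of a direct integral of reproducing-kernel spaces is simply the fibrewise kernel integrated against the spectral density.
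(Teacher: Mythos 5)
Your proposal is correct and is essentially the paper's own argument: the paper's ``proof'' is merely a citation to \cite[pp.~1256--1257]{ME2016}, where the kernel is derived by exactly your route --- partial Fourier transform in $x$ using translation invariance, the Paley--Wiener-type observation that finiteness of the norm kills the spectrum where $\rhoTildeAlpha$ diverges, and fibrewise assembly of the weighted Fock kernels $(2\xi/\pi)^{n-2}e^{2\xi\langle z,w\rangle}$ against the spectral density $\rhoTildeAlpha(\xi)\,d\xi$. One small overclaim in your sketch: suitability of $\rho$ does not make $\rhoTildeAlpha(\xi)$ finite \emph{precisely} for $\xi>0$ (e.g.\ $\rho(y)=e^{y}-1$ is suitable, yet $\rhoTildeAlpha(\xi)=\infty$ for $2\xi\le\alpha$), but this is harmless, since finiteness of $\|f\|$ forces the fibre component $h(\cdot,\xi)$ to vanish wherever $\rhoTildeAlpha(\xi)=\infty$, so \eqref{eq:KAlpha_Lemma} holds verbatim with the convention $1/\rhoTildeAlpha(\xi)=0$ on that set.
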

\begin{proof}
See \cite[p. 1256-1257]{ME2016}.
\end{proof}

For convenience, we rename the integration variable $\xi$ to $r$ in (\ref{eq:KAlpha_Lemma}) and define $\KZeroAlpha$ to be $\KAlpha$ with $z = w = 0$
\begin{equation}\label{eq:KZeroAlpha_0}
\KZeroAlpha(x+iy; a+ib) = \KAlpha(0, x+iy; 0, a+ib) = \frac{2^{n-3}}{\pi^{n-1}} \int_0^\infty r^{n-2}\  \frac{e^{i(x-a)r - (b+y)r}}{\rhoTildeAlpha(r)}\ dr.
\end{equation}
Further, seting $ a = 0 $ we obtain 
\begin{equation}\label{eq:KZeroAlpha_1}
\KZeroAlpha(x+iy; ib) = \frac{2^{n-3}}{\pi^{n-1}} \int_0^\infty r^{n-2}\  \frac{e^{ixr - (b+y)r}}{\rhoTildeAlpha(r)}\ dr.
\end{equation}
Finally, we set $w = 0$ and $a=0$ in (\ref{eq:FourierKAlpha_Lemma}) to define
\begin{equation}\label{eq:FourierKZeroAlpha_1}
\FourierKZeroAlpha(\xi) = \left({}_{(y,0,b,z,0)}\KAlpha\right)\hat{\ }(\xi) = \left(\frac{2\xi}{\pi}\right)^{n-2}\frac{e^{-(b+y)\xi}}{\rhoTildeAlpha(\xi)}.
\end{equation}

\begin{lemma}\label{lemma:BergamanKernelForHalfSpace}
For the half-space $\mathbb{H}^{n} = \{ (x, y) \in \mathbb{R}^{n-1} \times \mathbb{R} : y > 0 \}$ and the weight $ \omega(x, y) = \rho(y) $, the Bergman kernel of $ \bHarmAlpha(\mathbb{H}^n, \omega^\alpha) $ is given by
\begin{equation}\label{eq:RAlpha_1}
\RAlpha(x, y; a, b) = \left(2\pi\right)^{1-n} \int_{\mathbb{R}^{n-1}} \frac{e^{i(x-a)\cdot\xi - (b+y)|\xi|}}{\rhoTildeAlpha(|\xi|)}\ d\xi,
\end{equation}
where again $\rhoTildeAlpha(t) = \int_0^\infty \rho(y)^\alpha\ e^{-2ty}\ dy$. Moreover, the Fourier transform of the kernel $\RAlpha$ with respect to the variable $x$ takes the form
\begin{equation*}
\FourierRAlpha(\xi) = \frac{e^{ia \cdot \xi-(b+y)|\xi|}}{\rhoTildeAlpha(|\xi|)}.
\end{equation*}
\end{lemma}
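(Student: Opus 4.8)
The plan is to pass to the partial Fourier transform in the horizontal variable $x \in \mathbb{R}^{n-1}$, in parallel with the holomorphic computation underlying Lemma~\ref{lemma:BergamanKernelForSiegelDom}. Writing $\hat{f}(\xi, y)$ for the Fourier transform of $f(\cdot, y)$ on $\mathbb{R}^{n-1}$, the equation $\Delta f = 0$ becomes, for each fixed $\xi$, the ordinary differential equation $\partial_y^2 \hat{f}(\xi, y) = |\xi|^2\, \hat{f}(\xi, y)$, whose solution space is spanned by $e^{\pm |\xi| y}$. First I would argue that membership of $f$ in $\bHarmAlpha(\mathbb{H}^n, \omega^\alpha)$ forces the growing mode to vanish, so that $\hat{f}(\xi, y) = F(\xi)\, e^{-|\xi| y}$ for some $F$; it is the weighted square-integrability that rules out any $e^{+|\xi| y}$ contribution. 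This identifies the Bergman space, via $f \mapsto F$, with a space of functions on $\mathbb{R}^{n-1}$.

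Next I would compute the norm. By Plancherel in $x$ and Fubini,
\[
\|f\|^2 = (2\pi)^{1-n} \int_0^\infty \int_{\mathbb{R}^{n-1}} |F(\xi)|^2\, e^{-2|\xi| y}\, \rho(y)^\alpha\, d\xi\, dy = (2\pi)^{1-n}\int_{\mathbb{R}^{n-1}} |F(\xi)|^2\, \rhoTildeAlpha(|\xi|)\, d\xi,
\]
since the inner $y$-integral is exactly $\rhoTildeAlpha(|\xi|)$. Thus $f \mapsto F$ is, up to the constant $(2\pi)^{1-n}$, a unitary map onto $L^2\bigl(\mathbb{R}^{n-1}, \rhoTildeAlpha(|\xi|)\, d\xi\bigr)$. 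Since point evaluation $f \mapsto f(a,b) = (2\pi)^{1-n}\int_{\mathbb{R}^{n-1}} F(\xi)\, e^{ia\cdot\xi - |\xi| b}\, d\xi$ is a bounded functional on this model — boundedness following by Cauchy--Schwarz from finiteness of $\int_{\mathbb{R}^{n-1}} e^{-2|\xi| b}/\rhoTildeAlpha(|\xi|)\, d\xi$ — the Riesz representation theorem produces the reproducing kernel, whose Fourier transform in $x$ must be $e^{ia\cdot\xi - (b+y)|\xi|}/\rhoTildeAlpha(|\xi|)$. Inverting the Fourier transform then yields (\ref{eq:RAlpha_1}).

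Equivalently, and perhaps most cleanly, I would take (\ref{eq:RAlpha_1}) as an ansatz and verify the two defining properties directly. Harmonicity in $(x,y)$ is immediate because each integrand $e^{i(x-a)\cdot\xi - (b+y)|\xi|}$ is annihilated by $\Delta = \Delta_x + \partial_y^2$, as $\Delta_x e^{ix\cdot\xi} = -|\xi|^2 e^{ix\cdot\xi}$ cancels $\partial_y^2 e^{-y|\xi|} = |\xi|^2 e^{-y|\xi|}$, with differentiation under the integral justified by the decay from $1/\rhoTildeAlpha(|\xi|)$ for $y > 0$. For the reproducing property, I insert the representation of $f$ and of (\ref{eq:RAlpha_1}) and carry out the $x$-integration, which produces a factor $(2\pi)^{n-1}\delta(\xi - \eta)$ collapsing the double Fourier integral; the remaining integral $\int_0^\infty e^{-2|\xi| y}\rho(y)^\alpha\, dy$ reconstitutes $\rhoTildeAlpha(|\xi|)$ and cancels the denominator, leaving precisely $f(a,b)$. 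The formula for $\FourierRAlpha$ is then read off from (\ref{eq:RAlpha_1}).

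The main obstacle is the first step: rigorously justifying that weighted square-integrability excludes the $e^{+|\xi| y}$ solution of the ODE, and that point evaluation is bounded, so that the Bergman space is genuinely the image claimed and the kernel exists. This is where suitability of $\rho$ enters, guaranteeing that $\rhoTildeAlpha(t)$ is finite and positive for every $t > 0$ so that the denominators in (\ref{eq:RAlpha_1}) and in the norm are meaningful and the Fourier-side weight is well defined; the remaining convergence and Fubini verifications are then routine.
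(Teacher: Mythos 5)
Your argument is essentially the proof the paper relies on: Lemma~\ref{lemma:BergamanKernelForHalfSpace} is proved in the paper only by citation to \cite{ME2016}, p.~1255, where (\ref{eq:RAlpha_1}) is derived by exactly this route --- partial Fourier transform in $x$, the ODE $\partial_y^2\hat f(\xi,y)=|\xi|^2\hat f(\xi,y)$ with the growing mode $e^{+|\xi|y}$ killed by weighted square-integrability (using that $\rho'>0$ makes $\int_0^\infty e^{2|\xi|y}\rho(y)^\alpha\,dy=\infty$), the Plancherel identity giving $\|f\|^2=(2\pi)^{1-n}\int|F(\xi)|^2\,\rhoTildeAlpha(|\xi|)\,d\xi$, and Riesz representation. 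One small correction to your last paragraph: suitability does \emph{not} guarantee $\rhoTildeAlpha(t)<\infty$ for all $t>0$ (e.g.\ $\rho(y)=e^y-1$ is suitable yet has $\rhoTildeAlpha(t)=\infty$ for $2t\le\alpha$), but your proof survives unchanged, since the norm identity forces $F$ to vanish a.e.\ on $\{\xi:\rhoTildeAlpha(|\xi|)=\infty\}$ and (\ref{eq:RAlpha_1}) then holds with $1/\rhoTildeAlpha(|\xi|)$ read as $0$ on that set, while the Cauchy--Schwarz bound for point evaluation only improves.
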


\begin{proof}
Again, see \cite[p. 1255]{ME2016}.
\end{proof}

In polar coordinates $\xi \mapsto r\xi $, $\xi \in \mathbb{S}^{n-2}$, $d\xi = r^{n-2}\ dr\ d\sigma$, equation~(\ref{eq:RAlpha_1}) becomes
\begin{align}
\nonumber \RAlpha(x, y; a, b) &= \left( {2\pi} \right)^{1-n} \int_0^\infty r^{n-2} \frac{e^{-(b+y)r}}{\rhoTildeAlpha(r)} {\int_{\mathbb{S}^{n-2}} e^{i(x-a)\cdot r\xi}\ d\sigma(\xi)}\ dr \\
&= \frac{2^{2-n}\pi^{\frac{1-n}{2}}}{\Gamma\left(\frac{n-1}{2}\right)} \int_0^\infty r^{n-2} \frac{e^{-(b+y)r}}{\rhoTildeAlpha(r)} \myZeroFOne{\frac{n-1}{2}}{-\frac{r^2|x-a|^2}{4}}\ dr\label{eq:RAlpha_2},
\end{align}
where we have used the following formula while integrating over the sphere
\begin{equation*}
\int_{\mathbb{S}^{m-1}} \xi^\alpha\ d\xi = \begin{cases}
  0, & \text{if some $\alpha_j$ is odd}, \\
  2\frac{\Pi_j\ \Gamma\left(\frac{\alpha_j + 1}{2}\right)}{\Gamma\left(\frac{|\alpha|+m}{2}\right)}, & \text{if all $\alpha_j$ are even},
\end{cases}
\end{equation*}
which gives
\begin{equation}\label{eq:int_over_sphere}
\int_{\mathbb{S}^{n-2}} e^{i(x-a)\cdot r\xi}\ d\sigma(\xi) = \omega_{n-1}\ \myZeroFOne{\frac{n-1}{2}}{-\frac{r^2|x-a|^2}{4}}.
\end{equation}

\begin{proof}(of Theorem~\ref{th:BergmanKernel})
For $n>2$, the integral representation
\begin{equation*}
\myZeroFOne{b}{z} = \frac{\Gamma(b)}{\sqrt{\pi}\Gamma\left(b-\frac{1}{2}\right)} \int_{-1}^{1} \frac{(1-t^2)^{b-\frac{3}{2}}}{e^{2t\sqrt{z}}}\ dt,\quad \Re(b) > 1/2
\end{equation*}
leads to
\begin{equation}\label{eq:ZoreFOne}
\myZeroFOne{\frac{n-1}{2}}{-\frac{r^2|x-a|^2}{4}} = \frac{\Gamma\left(\frac{n-1}{2}\right)}{\sqrt{\pi}\Gamma\left(\frac{n-2}{2}\right)} \int_{-1}^{1} (1-t^2)^{\frac{n-4}{2}} e^{ir|x-a|t}\ dt
\end{equation}
and inserting~(\ref{eq:ZoreFOne}) into~(\ref{eq:RAlpha_2}) yields
\begin{align}
\nonumber \RAlpha(x, y; a, b) &= \frac{2^{2-n}\pi^{-\frac{n}{2}}}{\Gamma\left(\frac{n-2}{2}\right)} \int_{-1}^{1} (1-t^2)^{\frac{n-4}{2}} \int_0^\infty r^{n-2} \frac{e^{i|x-a|tr-(b+y)r}}{\rhoTildeAlpha(r)}\ dr\ dt.
\intertext{Using~(\ref{eq:KZeroAlpha_1}) we obtain}
\label{eq:RAlphaInTermsOfHolomNg2} \RAlpha(x, y; a, b) &= 2^{4-2n} \omega_{n-2} \int_{-1}^{1} (1-t^2)^{\frac{n-4}{2}} \KZeroAlpha(|x-a|t+iy; ib)\ dt.
\end{align}
The conditions $\rho' > 0$ and $(\rho' / \rho)' < 0$ in the definition of suitable function are equivalent to the strict pseudoconvexity of the Siegel domain $\mathcal{S}^{n-1}$, which is shown in \cite[p. 1257]{ME2016}. By \cite[Theorem 1]{ME2002} we have
\begin{equation*}
\KAlpha(z, w; Z, W) \approx \frac{\alpha^{n-1}}{\pi^{n-1} \omega(z, w; Z, W)^{\alpha}} \sum_{j=0}^{\infty} \frac{b_j(z, w; Z, W)}{\alpha^j}
\end{equation*}
near the diagonal $(z, w) = (Z, W)$ with coefficients $b_j \in C^{\infty}(\mathcal{S}^{n-1} \times \mathcal{S}^{n-1})$  almost analytic at $(z, w) = (Z, W)$. Here $\omega(z, w; Z, W)$ is a fixed almost-analytic extension of $\omega(z, x+iy) = \rho(y-|z|^2)$ and the first coefficient $b_0$ equals 
\begin{equation*}
b_0(z, w; Z, W) = \det\left[ \partial \bar{\partial} \log \frac{1}{\omega(z, w; Z, W)} \right].
\end{equation*}
In our settings the above with $\omega(z, x+iy) = \rho(y-|z|^2)$ at $\frac{w-\bar{W}}{2i} - \langle z, Z \rangle$ and $z=Z=0$ yields 
\begin{equation}\label{eq:asym_of_K_alpha_zero}
\KZeroAlpha(w; W) \approx \frac{\alpha^{n-1}}{4\pi^{n-1} \rho^{\alpha}} \left( \frac{\rho'}{\rho} \right)^{n-2} \left(-\frac{\rho'}{\rho} \right)' \sum_{j=0}^{\infty} \frac{c_j}{\alpha^j},
\end{equation}
evaluated at $E = \frac{w-\bar{W}}{2i}$. Here we denote by the same letter $\rho$  its fixed almost-analytic extension to a neighbourhood of the positive real half-axis, and $c_j(E)=b_j(0, w; 0, W)/b_0(0,w;0,W)$ with $c_0 \equiv 1$. Using the operator $I_t$, the function $Q$, and writing $w=x+iy$, $W=a+ib$, we insert (\ref{eq:asym_of_K_alpha_zero}) into (\ref{eq:RAlphaInTermsOfHolomNg2}) and obtain full asymptotic expansion for harmonic case, $n>2$, near the diagonal $(x,y)=(a,b)$ in (\ref{eq:ThRAlphaNg2}) evaluated at $T=\displaystyle\frac{y+b}{2} + \frac{|x-a|t}{2i}$. 

For $n=2$, we use identity $\myZeroFOne{1/2}{-z} = \cos(2\sqrt{z})$ in~(\ref{eq:RAlpha_2}) to obtain
\begin{align*}
\RAlpha(x, y; a, b) &= \frac{1}{\pi} \int_0^\infty \frac{e^{-(b+y)r}}{\rhoTildeAlpha(r)} \myZeroFOne{\frac{1}{2}}{-\left(\frac{r|x-a|}{2}\right)^2}\ dr\\
&= \frac{1}{\pi} \int_0^\infty \frac{e^{-(b+y)r}}{\rhoTildeAlpha(r)} \cos(r|x-a|)\ dr.
\end{align*}
Using formula $\cos(x) = (e^{ix}+e^{-ix})/2$ and the fact that function $\cos(x)$ is even, we continue by
\begin{align*}
\RAlpha(x, y; a, b) &=\frac{1}{2\pi} \left( \int_0^\infty \frac{e^{ir(x-a)}e^{-(b+y)r}}{\rhoTildeAlpha(r)}\ dr + \int_0^\infty \frac{e^{-ir(x-a)}e^{-(b+y)r}}{\rhoTildeAlpha(r)}\ dr\right)\\
&=\frac{1}{2\pi} \left( 2\pi\KZeroAlpha(x+iy;a+ib) + 2\pi\overline{\KZeroAlpha(x+iy;a+ib)}  \right)
\end{align*}
where the second equality comes from~(\ref{eq:KZeroAlpha_0}) for $n=2$ when Siegel domain $\mathcal{S}^1$ reduces to the upper half-plane of the complex plane. Therefore 
\begin{equation}\label{eq:RAlphaInTermsOfHolomNe2}
\RAlpha(x, y; a, b) = 2\Re \KZeroAlpha(x+iy;a+ib),
\end{equation}
and we insert  (\ref{eq:asym_of_K_alpha_zero}) into (\ref{eq:RAlphaInTermsOfHolomNe2}) to obtain (\ref{eq:ThRAlphaNe2}) evaluated at $T=\displaystyle\frac{y+b}{2} + \frac{x-a}{2i}$.
\end{proof}

\begin{corollary}\label{corollary:RAlphaOnDiagonal}
On the diagonal, the harmonic kernel $\RAlpha$ reduces to
\begin{equation*}
\RAlpha(a,b;a,b) = 2^{4-2n} \omega_{n-1} \KZeroAlpha(a+ib;a+ib) = 2^{4-2n} \omega_{n-1} \KZeroAlpha(ib; ib).
\end{equation*}
\end{corollary}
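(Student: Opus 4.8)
The plan is to obtain both claimed identities by simply restricting to the diagonal $(x,y)=(a,b)$ the two integral representations of $\RAlpha$ already derived during the proof of Theorem~\ref{th:BergmanKernel}. The point is that on the diagonal the auxiliary variable $t$ disappears from the argument of $\KZeroAlpha$, so the remaining $t$-integral becomes an elementary Beta integral whose value converts the constant $\omega_{n-2}$ into $\omega_{n-1}$.

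For $n>2$ I would start from the representation (\ref{eq:RAlphaInTermsOfHolomNg2}) and set $x=a$, $y=b$. Since then $|x-a|=0$, the factor $\KZeroAlpha(|x-a|t+iy;ib)$ reduces to the constant $\KZeroAlpha(ib;ib)$, which is independent of $t$ and may be pulled out of the integral. What remains is
\[
\int_{-1}^{1}(1-t^2)^{\frac{n-4}{2}}\,dt,
\]
which the substitution $s=t^2$ turns into the Beta integral $B\bigl(\tfrac12,\tfrac{n-2}{2}\bigr)=\sqrt{\pi}\,\Gamma\bigl(\tfrac{n-2}{2}\bigr)/\Gamma\bigl(\tfrac{n-1}{2}\bigr)$; this converges precisely because in this branch $n>2$, so $\tfrac{n-2}{2}>0$. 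Combining, the numerical prefactor becomes $2^{4-2n}\omega_{n-2}\sqrt{\pi}\,\Gamma\bigl(\tfrac{n-2}{2}\bigr)/\Gamma\bigl(\tfrac{n-1}{2}\bigr)$, and inserting $\omega_{n-2}=2\pi^{(n-2)/2}/\Gamma\bigl(\tfrac{n-2}{2}\bigr)$ cancels the $\Gamma\bigl(\tfrac{n-2}{2}\bigr)$ factors and raises the power of $\pi$ by $\tfrac12$, yielding exactly $2^{4-2n}\cdot 2\pi^{(n-1)/2}/\Gamma\bigl(\tfrac{n-1}{2}\bigr)=2^{4-2n}\omega_{n-1}$. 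The second equality $\KZeroAlpha(a+ib;a+ib)=\KZeroAlpha(ib;ib)$ I would read straight off (\ref{eq:KZeroAlpha_0}): setting $x=a$, $y=b$ makes the oscillatory factor $e^{i(x-a)r}$ equal to $1$, so the common real part $a$ of the two base points is immaterial.

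For $n=2$ the representation (\ref{eq:RAlphaInTermsOfHolomNg2}) and the operator $I_t$ are no longer available, so I would instead invoke (\ref{eq:RAlphaInTermsOfHolomNe2}), namely $\RAlpha(x,y;a,b)=2\Re\KZeroAlpha(x+iy;a+ib)$. On the diagonal the integrand of $\KZeroAlpha(a+ib;a+ib)$ in (\ref{eq:KZeroAlpha_0}) is real (again $e^{i(x-a)r}=1$), so $\KZeroAlpha(a+ib;a+ib)$ is real and equals $\KZeroAlpha(ib;ib)$, whence $\RAlpha(a,b;a,b)=2\KZeroAlpha(ib;ib)$. This agrees with the stated formula because $2^{4-2n}\omega_{n-1}=2^{0}\omega_{1}=2$ when $n=2$.

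The computation is entirely routine; the only points demanding attention are the bookkeeping in the Beta-function reduction that collapses $\omega_{n-2}$ to $\omega_{n-1}$, and the observation that the dimension $n=2$ must be handled separately through (\ref{eq:RAlphaInTermsOfHolomNe2}), since the hypergeometric integral representation behind (\ref{eq:RAlphaInTermsOfHolomNg2}) is valid only for $n>2$.
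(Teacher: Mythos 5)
Your proof is correct, but it takes a genuinely different route from the paper's. The paper never touches the representations (\ref{eq:RAlphaInTermsOfHolomNg2}) and (\ref{eq:RAlphaInTermsOfHolomNe2}): it sets $x=a$, $y=b$ one step earlier, in (\ref{eq:RAlpha_2}), where the identity $\myZeroFOne{b}{0}=1$ --- valid with no restriction on the parameter --- collapses the hypergeometric factor, and then simply compares the resulting integral $\frac{2^{2-n}\pi^{(1-n)/2}}{\Gamma\left(\frac{n-1}{2}\right)}\int_0^\infty r^{n-2}\,\frac{e^{-2br}}{\rhoTildeAlpha(r)}\,dr$ with the expression (\ref{eq:KZeroAlpha_0}) for $\KZeroAlpha(a+ib;a+ib)$; the ratio of prefactors is $2^{4-2n}\omega_{n-1}$ at once, with no case distinction between $n=2$ and $n>2$. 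You instead restrict the later kernel-to-kernel identities to the diagonal, which forces exactly the split your closing paragraph acknowledges: for $n>2$ a Beta-integral computation $\int_{-1}^1 (1-t^2)^{\frac{n-4}{2}}\,dt = \sqrt{\pi}\,\Gamma\bigl(\tfrac{n-2}{2}\bigr)/\Gamma\bigl(\tfrac{n-1}{2}\bigr)$ that converts $\omega_{n-2}$ into $\omega_{n-1}$, and for $n=2$ a separate real-part argument via (\ref{eq:RAlphaInTermsOfHolomNe2}). Both branches check out, including the constants (in particular $2^{4-2n}\omega_{n-1}=2$ at $n=2$, matching $\RAlpha(a,b;a,b)=2\KZeroAlpha(ib;ib)$), and your observation that the integrand of $\KZeroAlpha(a+ib;a+ib)$ is real on the diagonal is what legitimizes dropping the real part. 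What the paper's route buys is uniformity and brevity --- a single comparison of two integrals covering all $n\geq 2$, since $\myZeroFOne{b}{0}=1$ needs no hypothesis, whereas the integral representation (\ref{eq:ZoreFOne}) underlying your $n>2$ branch does. What your route buys is a structural point the paper leaves implicit: the corollary is literally the diagonal restriction of the \emph{exact} (not merely asymptotic) identities relating $\RAlpha$ to $\KZeroAlpha$ established inside the proof of Theorem~\ref{th:BergmanKernel}, and your computation makes explicit how the sphere-area constant migrates from $\omega_{n-2}$ to $\omega_{n-1}$. Your justification of the second equality, $\KZeroAlpha(a+ib;a+ib)=\KZeroAlpha(ib;ib)$ via $e^{i(x-a)r}=1$ in (\ref{eq:KZeroAlpha_0}), is the same fact the paper uses silently.
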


\begin{proof}
Identity $\myZeroFOne{b}{0} = 1$ has no restriction on the parameter $b$ and from equation~(\ref{eq:RAlpha_2}) we get
\begin{equation*}
\RAlpha(a,b;a,b) = \frac{2^{2-n}\pi^{\frac{1-n}{2}}}{\Gamma\left(\frac{n-1}{2}\right)} \int_0^\infty r^{n-2} \frac{e^{-2br}}{\rhoTildeAlpha(r)}\ dr,
\end{equation*}
from equation~(\ref{eq:KZeroAlpha_0}) we have
\begin{equation*}
\KZeroAlpha(a+ib; a+ib) = \frac{2^{n-3}}{\pi^{n-1}} \int_0^\infty r^{n-2}\  \frac{e^{-2br}}{\rhoTildeAlpha(r)}\ dr
\end{equation*}
and the result comes from comparing the above two expressions. 
\end{proof}

\section{Berezin transform}\label{sec:BerezinTransform}

For a bounded function $f$ on $\mathbb{H}^n$, the harmonic Berezin transform of  $ \bHarmAlpha(\mathbb{H}^n, \omega^\alpha) $, $ \omega(x, y) = \rho(y) $, is given by equation~(\ref{eq:harmBerezinTr}), i.e.
\begin{align*}
(\BHarmAlpha f)(a, b) &= \frac{1}{\RAlpha(a, b; a, b)} \frac{2^{10-4n} \pi^{n-2}}{\left(\Gamma\left(\frac{n-2}{2}\right)\right)^2} \int_{\mathbb{R}^{n-1}} \int_0^\infty f(x, y) \rho(y)^\alpha \\
&\times \int_{-1}^{1} (1-t^2)^{\frac{n-4}{2}} \KZeroAlpha(|x-a|t+iy; ib) \\
&\times \int_{-1}^{1} (1-s^2)^{\frac{n-4}{2}} \KZeroAlpha(|x-a|s+iy; ib)\ ds\ dt\ dy\ dx,\quad  &n>2,\\
(\BHarmAlpha f)(a, b) &= \frac{4}{\RAlpha(a, b; a, b)} \int_{\mathbb{R}} \int_0^\infty f(x, y) \rho(y)^\alpha \\
&\times \left( \Re(\KZeroAlpha(x+iy;a+ib)) \right)^2\ dy\ dx \quad &n=2.
\end{align*}
Suppose $f$ depends only on the vertical coordinate, that is $f(x,y) = g(y)$ for some bounded function $g$ on $(0, \infty)$. We define a bounded function $h$ on the Siegel domain in terms of $f$ by $h(z, x+iy) = g(y-|z|^2)$, that is function $h$ is of the same form as the weight on the Siegel domain under the consideration and it is clear that $h(0,iy) = f(0,y)$.


\begin{proof}(of Theorem~\ref{th:BerezinTransform})
The Berezin transform for a function of the from $h(z, x+iy) = g(y-|z|^2) $ on $\bHolAlpha(\mathcal{S}^{n-1}, \omega^\alpha)$ is given by
\begin{align} \label{eq:BAlphaHol}
(\BHolAlpha h)(0, ib) &= \frac{1}{\KZeroAlpha(ib; ib)} \\
\nonumber &\times \int_{\mathbb{C}^{n-2}} \int_{|z|^2}^{\infty} g(y-|z|^2) \rho(y-|z|^2)^\alpha \\
\nonumber &\times \int_{\mathbb{R}}  |\KAlpha(z, x+iy; 0, ib)|^2\ dx\ dy\ dz.
\end{align}
Using Parseval's identity, equation (\ref{eq:FourierKZeroAlpha_1}), and Lemma~\ref{lemma:BergamanKernelForSiegelDom} we compute
\begin{align*}
\int_{\mathbb{R}}  |\KAlpha(z, x+iy; 0, ib)|^2\ dx &=
\frac{1}{2\pi} \int_{\mathbb{R}}  |\FourierKZeroAlpha(\xi)|^2\ d\xi \\
&= \frac{1}{2\pi} \int_{\mathbb{R}}
\left(\frac{2\xi}{\pi}\right)^{n-2}\frac{e^{-(b+y)\xi}}{\rhoTildeAlpha(\xi)}
\left(\frac{2\xi}{\pi}\right)^{n-2}\frac{e^{-(b+y)\xi}}{\rhoTildeAlpha(\xi)}
\ d\xi \\
&= 2^{2n-5} \pi^{3-2n} \int_{0}^{\infty} \xi^{2n-4}\frac{e^{-2(b+y)\xi}}{\rhoTildeAlpha(\xi)^2}
\ d\xi,
\end{align*}
and insert the above into equation~(\ref{eq:BAlphaHol}) while renaming the integration variable $\xi$ to $r$ at the same time. We obtain
\begin{align*}
(\BHolAlpha h)(0, ib) &= \frac{2^{2n-5}\pi^{3-2n}}{\KZeroAlpha(ib; ib)} \int_{\mathbb{C}^{n-2}} \int_{|z|^2}^{\infty} g(y-|z|^2) \rho(y-|z|^2)^\alpha \\
&\times \int_0^\infty r^{2n-4} \frac{e^{-2(b+y)r}}{\rhoTildeAlpha(r)^2}\ dr\ dy\ dz.
\end{align*}
Change of variables $ y \mapsto y-|z|^2 $ and re-arranging the integrals leads to
\begin{align*}
(\BHolAlpha h)(0, ib) &= \frac{2^{2n-5}\pi^{3-2n}}{\KZeroAlpha(ib; ib)} \int_{0}^{\infty} \int_{0}^{\infty} g(y) \rho(y)^\alpha r^{2n-4} \frac{e^{-2(b+y)r}}{\rhoTildeAlpha(r)^2} \\
&\times \int_{\mathbb{C}^{n-2}} e^{-2r|z|^2}\ dz\ dr\ dy.
\end{align*}
Here $\int_{\mathbb{C}^{n-2}} e^{-2r|z|^2}\ dz$ is the Gaussian integral over $\mathbb{C}^{n-2}$ and equals to $\left( \frac{\pi}{2r} \right)^{n-2}$, hence
\begin{align*}
(\BHolAlpha h)(0, ib) = \frac{2^{n-3}\pi^{1-n}}{\KZeroAlpha(ib; ib)} \int_{0}^{\infty} \int_{0}^{\infty} g(y) \rho(y)^\alpha r^{n-2} \frac{e^{-2(b+y)r}}{\rhoTildeAlpha(r)^2}\ dr\ dy,
\end{align*}
and as a matter of convenience, we express the double integral as
\begin{equation}\label{eq:holomorphic_integral}
\int_{0}^{\infty} \int_{0}^{\infty} g(y) \rho(y)^\alpha r^{n-2} \frac{e^{-2(b+y)r}}{\rhoTildeAlpha(r)^2}\ dr\ dy = \frac{\KZeroAlpha(ib; ib)}{2^{n-3}\pi^{1-n}} (\BHolAlpha h)(0, ib).
\end{equation}

For the harmonic case, the Berezin transform of a function $f$ depending only on the vertical coordinate, that is $f(x, y) = g(y)$, takes the form 
\begin{equation}\label{eq:BAlphaHarm}
(\BHarmAlpha f)(a, b) = \frac{1}{\RAlpha(a, b; a, b)} \int_0^\infty g(y) \rho(y)^\alpha \int_{\mathbb{R}^{n-1}} |\RAlpha(x, y; a, b)|^2\ dx\ dy.
\end{equation}
Using Parseval's identity again and Lemma~\ref{lemma:BergamanKernelForHalfSpace} we have
\begin{align*}
\int_{\mathbb{R}^{n-1}} |\RAlpha(x, y; a, b)|^2\ dx &= \frac{1}{(2\pi)^{n-1}} \int_{\mathbb{R}^{n-1}} |\FourierRAlpha(\xi)|^2\ d\xi \\
&= \frac{1}{(2\pi)^{n-1}} \int_{\mathbb{R}^{n-1}} \frac{e^{ia \cdot \xi-(b+y)|\xi|}}{\rhoTildeAlpha(|\xi|)} \frac{e^{-ia \cdot \xi-(b+y)|\xi|}}{\rhoTildeAlpha(|\xi|)}\ d\xi \\
&= \frac{1}{(2\pi)^{n-1}} \int_{\mathbb{R}^{n-1}} \frac{e^{-2(b+y)|\xi|}}{\rhoTildeAlpha(|\xi|)^2} \ d\xi \quad \text{(to polar coordinates $\xi \mapsto r\xi$)}\\
&= \frac{1}{(2\pi)^{n-1}} \int_0^\infty r^{n-2} \frac{e^{-2(b+y)r}}{\rhoTildeAlpha(r)^2} \int_{\mathbb{S}^{n-2}} \ d\sigma(\xi)\ dr \\
&= 2^{1-n} \pi^{1-n} \omega_{n-1} \int_0^\infty r^{n-2} \frac{e^{-2(b+y)r}}{\rhoTildeAlpha(r)^2} \ dr.
\end{align*}
Inserting the above into~(\ref{eq:BAlphaHarm}) we get
\begin{align*}
(\BHarmAlpha f)(a, b) &= \frac{2^{1-n} \pi^{1-n} \omega_{n-1}}{\RAlpha(a, b; a, b)} \int_0^\infty \int_0^\infty g(y) \rho(y)^\alpha r^{n-2} \frac{e^{-2(b+y)r}}{\rhoTildeAlpha(r)^2} \ dr\ dy \label{eq:harmonic_integral}\\
&= \frac{2^{1-n} \pi^{1-n} \omega_{n-1}}{\RAlpha(a, b; a, b)}\ \frac{\KZeroAlpha(ib; ib)}{2^{n-3}\pi^{1-n}} (\BHolAlpha h)(0, ib) \\
&=  \frac{2^{1-n} \pi^{1-n} \omega_{n-1}}{2^{4-2n}\omega_{n-1}\KZeroAlpha(a+ib;a+ib)}
    \frac{\KZeroAlpha(ib; ib)}{2^{n-3}\pi^{1-n}}\ \BHolAlpha f(0, ib) \\
&=(\BHolAlpha h)(0, ib),
\end{align*}
where we have used~(\ref{eq:holomorphic_integral}) in the second equality and Corollary~\ref{corollary:RAlphaOnDiagonal} to finish the calculation.

To obtain full asymptotic expansion of $\BHarmAlpha$ as $\alpha \to \infty$ in terms of $\BHolAlpha$, we use \cite[Theorem 2]{ME2002}
\begin{equation*}
(\BHolAlpha f)(0, ib) = \sum_{j=0}^{\infty} Q_j f(0, ib),
\end{equation*}
where $Q_j$ are linear differential operators whose coefficients involve only derivatives of $\rho$ at the point $(0, ib)$. The first three terms are given by \cite[Theorem 11]{ME2002}, i.e. 
\begin{align*}
Q_0 &= I, \\
Q_1 &= \tilde{\Delta} \equiv \sum_{j,k} g^{\bar{j}k} \frac{\partial^2}{\partial\bar{z}_j \partial z_k}, \\
Q_2 &= \frac{1}{2} \tilde{\Delta}^2 + \frac{1}{2} \sum_{j,k} {\Ric}^{\bar{j}k} \frac{\partial^2}{\partial\bar{z}_j \partial z_k} , \\
\end{align*}
where $I$ is identity operator, $g^{\bar{j}k}$ is inverse matrix to $g_{j\bar{k}} = \partial^2 (-\log \rho)/\partial z_j \partial \bar{z}_k$, and ${\Ric}^{\bar{j}k}$ is contravariant Ricci tensor
\begin{equation*}
{\Ric}^{\bar{j}k} = \sum_{l,m} g^{\bar{j}l} g^{\bar{m}k} \frac{\partial^2 \log \chi}{\partial \bar{z}_m \partial z_l}
\end{equation*}
with $\chi = \det[g_{j\bar{k}}]$. 

The matrix $g_{j\bar{k}}$ was computed in \cite{ME2016} on p. 1257 in terms of the function $\phi = -\log(\rho)$, and evaluation at $(0, ib)$ yields
\begin{subequations}
\begin{align}\label{eq:g_jk}
g_{j\bar{k}} &= \begin{bmatrix}
-\phi'(b)I & 0 \\
0          & \phi''(b)/4  
\end{bmatrix},\\
\label{eq:inverse_g_jk}
g^{\bar{j}k} &= \begin{bmatrix}
-1/\phi'(b)\ I & 0 \\
0              & 4/\phi''(b)  
\end{bmatrix},
\end{align}
\end{subequations}
where $I$ is the identity matrix of size $n-2$, $\phi'' = \left(-\frac{\rho'}{\rho} \right)'$, and $\phi' = -\frac{\rho'}{\rho}$, which gives
\begin{equation}\label{eq:Q1}
Q_1 = \tilde{\Delta} = \frac{4}{\phi''(b)} \frac{\partial^2}{\partial\bar{z}_{n-1} \partial z_{n-1}} + \sum_{k=1}^{n-2} \left( -\frac{1}{\phi'(b)} \right) \frac{\partial^2}{\partial\bar{z}_k \partial z_k}.
\end{equation}
Diagonal matrix (\ref{eq:inverse_g_jk}) implies that ${\Ric}^{\bar{j}k} = 0$ for $j \neq k$ so we are left only with entries for $j=k$ 
\begin{equation*}
{\Ric}^{\bar{j}j} = \left( g^{\bar{j}j} \right)^2\ \frac{\partial^2 \log \chi}{\partial \bar{z}_j \partial z_j}.
\end{equation*}
The second partial derivatives of the function 
\begin{equation*}
\psi = \log \chi = \log \det[g_{j\bar{k}}] = \log\left(\frac{\phi''}{4} (-\phi')^{n-2}\right)
\end{equation*}
evaluated at $(0, ib)$ are given by (\ref{eq:g_jk}) by replacing $\phi$ with $\psi$, hence $Q_2$ takes the form
\begin{equation*}
Q_2 = \frac{1}{2} \tilde{\Delta}^2 + 2 \frac{\psi''(b)}{\phi''(b)^2} \frac{\partial^2}{\partial\bar{z}_{n-1} \partial z_{n-1}} + \frac{1}{2} \sum_{j=1}^{n-2} -\frac{\psi'(b)}{\phi'(b)^2} \frac{\partial^2}{\partial \bar{z}_j \partial z_j}.
\end{equation*}
\end{proof}

\section{Verification}\label{sec:Verification}

Firstly, we recover the result for the harmonic Bergman kernel stated in \cite{JJ2013} by applying Theorem~\ref{th:BergmanKernel} and using well known holomorphic kernel on the Siegel domain for case $n>2$.

The harmonic Bergman kernel for $\bHarmAlpha(\mathbb{H}^n, \omega^\alpha)$, $\omega(x, y) = y$, is given by equation (16) in \cite{JJ2013}. Using polar coordinates, see \cite[page 725]{JJ2013}, we compute
\begin{align}
\nonumber \RAlpha(x, y; 0, 1) &= \frac{2^{\alpha+1}}{(2\pi)^{n-1} \Gamma(\alpha+1)} \int_{0}^{\infty} r^{\alpha+n-1} e^{-(y+1)r} \int_{\mathbb{S}^{n-2}} e^{ir\xi \cdot x} d\sigma(\xi) dr \\
\nonumber &= \frac{2^{\alpha+1} \omega_{n-1}}{(2\pi)^{n-1} \Gamma(\alpha+1)} \int_{0}^{\infty} r^{\alpha+n-1} e^{-(y+1)r} \myZeroFOne{\frac{n-1}{2}}{-\frac{r^2|x|^2}{4}} dr \\
\nonumber &= \frac{2^{\alpha-n+2} \omega_{n-1} \Gamma\left(\frac{n-1}{2}\right)}{\pi^{n-\frac{1}{2}} \Gamma(\alpha+1) \Gamma\left(\frac{n-2}{2}\right)} \int_{-1}^{1} \left(1-t^2\right)^{\frac{n-4}{2}} \int_{0}^{\infty} r^{\alpha+n-1} e^{-(y+1-i|x|t)r} dr\ dt \\
\label{eq:RAlphaOnHNFor0_1}&= \frac{2^{\alpha-n+3} \Gamma(\alpha+n)}{\pi^{\frac{n}{2}} \Gamma(\alpha+1) \Gamma\left(\frac{n-2}{2}\right)} \int_{-1}^{1} \left(1-t^2\right)^{\frac{n-4}{2}}  \left(y+1-i|x|t\right)^{-\alpha-n}\ dt,
\end{align} 
where we used equation~(\ref{eq:int_over_sphere}) to integrate over the sphere in the second equality, integral representation~(\ref{eq:ZoreFOne}) for $\myZeroFOne{b}{z}$ in the third equality, and finally evaluated integral over $dr$ using equality
\begin{equation*}
\int_{0}^{\infty} r^{\alpha+n-1} e^{-zr} dr = \Gamma(\alpha+n)z^{-\alpha-n},\quad \Re z > 0.
\end{equation*}

The holomorphic Bergman kernel for $\bHolAlpha(\mathcal{S}^{n-1}, \omega^\alpha)$, $\omega(z, x+iy) = y-|z|^2$, on the Siegel domain is well know to be
\begin{equation*}
\KAlpha(z, x+iy; w, a+ib) = \frac{2^{n+\alpha-2} \Gamma(n+\alpha)}{\pi^{n-1} \Gamma(\alpha+1)} \left( i(\overline{a+ib} - (x+iy)) - 2\langle z, w \rangle \right)^{-n-\alpha}.
\end{equation*}
Specializing the above for $z=w=0$ and $a=0$ we get
\begin{equation}\label{eq:KZeroAlphaForKnownSiegelCase}
\KZeroAlpha(x+iy; ib) = \frac{2^{n+\alpha-2} \Gamma(n+\alpha)}{\pi^{n-1} \Gamma(\alpha+1)} \left( b - ix + y \right)^{-n-\alpha}.
\end{equation}
Inserting~(\ref{eq:KZeroAlphaForKnownSiegelCase}) into~(\ref{eq:RAlphaInTermsOfHolomNg2}) and evaluating at $(x, y; 0, 1)$ gives
\begin{align*}
\RAlpha(x, y; 0, 1) &= 2^{4-2n}\omega_{n-2} \int_{-1}^{1} (1-t^2)^{\frac{n-4}{2}} \KZeroAlpha(|x|t+iy; i)\ dt \\
&= \frac{2^{\alpha-n+3} \Gamma(\alpha+n)}{\pi^{\frac{n}{2}} \Gamma(\alpha+1) \Gamma\left(\frac{n-2}{2}\right)} \int_{-1}^{1} \left( 1-t^2 \right)^{\frac{n-4}{2}} (1+y-i|x|t)^{-n-\alpha}\ dt,
\end{align*}
which is~(\ref{eq:RAlphaOnHNFor0_1}).

Secondly, we recover the second differential operator $R_1$ in the harmonic Berezin transform in \cite[page 729]{JJ2013}. We have
\begin{equation*}
R_1 f (x,y) = y^2\frac{\Delta f}{n-1} (x,y) + (2-n) y \frac{\partial f}{\partial y} (x,y) + y^2 \frac{\partial^2 f}{\partial y^2} (x,y)
\end{equation*}
which for the function depending only on the vertical coordinate reduces to
\begin{equation}\label{eq:R1}
R_1 f (y) = (2-n) y f'(y) + y^2 f''(y).
\end{equation}
The second partial derivatives of the function $f(y-|z|^2)$ evaluated at $z=0$ are given by (\ref{eq:g_jk}) by replacing $\phi$ with $f$. Also
\begin{equation*}
\phi(y) = -\log(y),\quad \phi'(y) = -\frac{1}{y},\quad \phi''(y) = \frac{1}{y^2},
\end{equation*}
hence inserting the above into~(\ref{eq:Q1}) gives~(\ref{eq:R1})
\begin{align*}
Q_1 f(y) = y^2 f''(y) + \sum_{k=1}^{n-2} y (-f'(y)) = y^2 f''(y) + (2-n)y  f'(y).
\end{align*}

\ifextras
\section*{Acknowledgements}
The author thanks Miroslav Engli\v{s} for the introduction to the subject.
\fi

\bibliographystyle{plain}
\bibliography{HarmBerezinransformSpecialCase}

\end{document}